\def\N{{\mathbb{N}}}
\def\R{{\mathbb{R}}}
\def\C{{\mathbb{C}}}
\newcommand{\ssm}{\smallsetminus}
\newcommand{\overbar}[1]{\mkern 1.5mu\overline{\mkern-1.5mu#1\mkern-1.5mu}\mkern 1.5mu}
\numberwithin{example}{subsection}
\newtheorem{proposition}{Proposition}
\newtheorem{theorem}{Theorem}
\newtheorem{lemma}{Lemma}
\title{Detecting conjugacy stability of subgroups in certain classes of groups}
\author{Isabel Fern\'andez Mart\'inez}
\address{} 
\email{isabel10791@gmail.com}
\author{Denis Serbin}
\address{Department of Mathematical Sciences, Stevens Institute of Technology, 1 Castle Point on Hudson, Hoboken, NJ 07030, USA} 
\email{d.e.serbin@gmail.com}
\keywords{Conjugacy stable, conjugacy-closed, Frattini embedded}
\subjclass[2010]{20F65, 20F67, 20F10}
\date{}
\begin{document}

\begin{abstract}
In this paper we consider the {\em conjugacy stability} property of subgroups and provide effective procedures to solve the problem in several classes of groups. In particular, we start with free groups, that is, we give an effective procedure to find out if a finitely generated subgroup of a free group is conjugacy stable. Then we further generalize this result to quasi-convex subgroups of torsion-free hyperbolic groups and finitely generated subgroups of limit groups.
\end{abstract}

\maketitle

\tableofcontents

\section{Introduction}
\label{se:intro}

A subgroup $H$ of a group $G$ is called {\em conjugacy stable} if every two elements from $H$ conjugate in $G$ are also conjugate in $H$ itself.  It is obvious that the property of being conjugacy stable is transitive: if $H$ is conjugacy stable in $G$ and $K$ is conjugacy stable in $H$, then $K$ is conjugacy stable in $G$. Further, if $H \mathrel{\unlhd} G$ is conjugacy stable, then it is a transitively normal subgroup of $G$, that is, every subgroup of $H$ is also normal in $G$.

The property of subgroups to be conjugacy stable was studied in linear groups (under the name {\em conjugacy-closed}). It is known that, if $k$ is a subfield of a field $K$, then the general linear group $GL_n(k)$ over $k$ is a conjugacy stable subgroup of the general linear group $GL_n(K)$ over $K$ (not true if $k$ is only a subring of $K$). Next, the orthogonal group over reals $O_n(\R)$ is a conjugacy stable subgroup of $GL_n(\R)$. Further, the unitary group of complex matrices $U_n(\C)$ is a conjugacy stable subgroup of $GL_n(\C)$. Eventually, Brauer's permutation lemma (see \cite{Brauer:1941}) can be reformulated as follows: the subgroup of permutation matrices in $GL_n(k)$, where $k$ is any field of characteristic zero, is conjugacy stable in $GL_n(k)$.

This property is also known in the context of symmetric groups. Namely, let $A$ be a subset of a set $B$. Then, naturally $Sym(A) \leqslant Sym(B)$ and if either $A$, or $B \ssm A$ is finite, then $Sym(A)$ is conjugacy stable in $Sym(B)$.

Last but not least, conjugacy stable subgroups are known under the name of {\em Frattini embedded} subgroups (see \cite{Thompson:1980}). It is easy to see that if $H$ is a conjugacy stable subgroup of a group $G$ and the conjugacy problem is solvable in $G$, then it is also solvable in $H$. This observation was used in \cite{Ol'shanskii_Sapir:2004}  to obtain interesting embedding results.

More recently, the conjugacy stability property was investigated in the case of parabolic subgroups of Artin-Tits groups of spherical type in \cite{Calvez_CisnerosDeLaCruz_Cumplido:2020}.

\smallskip

The goal of this paper is to study this interesting subgroup property from an algorithmic viewpoint. Namely, we concentrate on the {\em conjugacy stability problem}: find an effective procedure that, given a group $G$ and its subgroup $H$, determines if $H$ is conjugacy stable in $G$. We solve the problem for several classes of groups, in particular, free groups, torsion-free hyperbolic groups, and limit groups.

\section{General approach to the problem}
\label{se:general}

Let $G$ be a group and let $H \leqslant G$. Everywhere in the text below we use the convention $H^g = g^{-1} H g$.

Recall the definition given in the introduction. $H$ is called {\em conjugacy stable} in $G$ if for every $g \in G \ssm H$ such that $g^{-1} u g = v$ for some $u, v \in H$, there exists $h \in H$ such that $h^{-1} u h = v$. 

First of all, notice that if there exist $u, v \in H$ such that $g^{-1} u g = v$ for some $g \in G \ssm H$, then $H^g \cap H \neq 1$. Now, existence of $h \in H$ such that $h^{-1} u h = v$ can be reformulated as shown in the lemma below.

\begin{lemma} 
\label{le:general_1}
Let $G$ be a group and let $H \leqslant G$. Then $H$ is conjugacy stable if and only if for every $g \in G \ssm H$ such that $H^g \cap H \neq 1$ and every non-trivial $u \in H^{g^{-1}} \cap H$, the intersection $H \cap C_G(u) g$ is non-empty.
\end{lemma}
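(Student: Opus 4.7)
The plan is to reduce the equivalence to a single algebraic calculation, namely the observation that for $u \in H$ and $g \in G$, an element $h \in G$ satisfies $h^{-1}uh = g^{-1}ug$ if and only if $hg^{-1}$ centralizes $u$, i.e.\ $h \in C_G(u)g$. Thus the condition ``there exists $h \in H$ with $h^{-1}uh = v$'' (where $v := g^{-1}ug$) is literally the same as the condition ``$H \cap C_G(u)g \neq \emptyset$''. Once this is in hand, both directions of the lemma amount to matching up the quantifiers.

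For the forward direction, I would take $g \in G \ssm H$ with $H^g \cap H \neq 1$ and a non-trivial $u \in H^{g^{-1}} \cap H$, set $v := g^{-1}ug$, and note that $u \in H^{g^{-1}}$ forces $v \in H$. Conjugacy stability, applied to the pair $u, v \in H$ conjugated by $g \notin H$, yields $h \in H$ with $h^{-1}uh = v$, and the key observation places $h$ in $H \cap C_G(u)g$. For the backward direction, I would take arbitrary $g \in G \ssm H$ and $u, v \in H$ with $g^{-1}ug = v$. The degenerate case $u = 1$ gives $v = 1$ and is witnessed by $h = 1$, so assume $u \neq 1$; then $u = gvg^{-1} \in H^{g^{-1}} \cap H$ is non-trivial and $v \in H^g \cap H$ is non-trivial, so the right-hand hypothesis applies and produces $h \in H \cap C_G(u)g$, which by the key observation conjugates $u$ to $v$.

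The only real obstacle is bookkeeping: keeping the $g$ and $g^{-1}$ conventions straight and checking that the non-triviality clauses on the right correctly account for the trivial case $u = v = 1$. There is no substantive difficulty beyond recognizing that $h^{-1}uh = g^{-1}ug$ is a ``linear'' equation in $h$ whose full solution set in $G$ is the coset $C_G(u)g$, and intersecting this solution set with $H$ gives the desired reformulation.
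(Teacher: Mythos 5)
Your proposal is correct and follows essentially the same argument as the paper: both hinge on the observation that $h^{-1}uh = g^{-1}ug$ is equivalent to $h \in C_G(u)g$, and then match quantifiers in each direction. Your explicit treatment of the degenerate case $u = v = 1$ is a small point of extra care that the paper leaves implicit, but it does not change the route.
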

\begin{proof}
In order to determine if $H$ is conjugacy stable, for every $g \in G \ssm H$ such that $g^{-1} u g = v$ for some $u, v \in H$, one has to find out if there exists $g_0 \in H$ such that $g_0^{-1} u g_0 = v$.

Observe that if for a given $g$ such $g_0$ exists then $g^{-1} u g = g_0^{-1} u g_0$ and $[g_0 g^{-1}, u] = 1$, so $g_0 g^{-1} \in C_G(u)$ and $g_0 \in C_G(u) g$. Hence, if $H$ is conjugacy stable, then for every $g \in G \ssm H$, such that $H^g \cap H \neq 1$, and every $u \in H^{g^{-1}} \cap H$, we have $H \cap C_G(u) g \neq \varnothing$.

The converse is also true. Indeed, if $g^{-1} u g = v$ for some $u, v \in H$, then $u \in H \cap H^{g^{-1}}$ and by assumption there exists $h \in H \cap C_G(u) g$. But then $h = a g, [a, u] = 1$ and
$$h^{-1} u h = g^{-1} (a^{-1} u a) g  = g^{-1} u g = v,$$
that is, $H$ is conjugacy stable.
\end{proof}

The above lemma makes the process of checking is $H$ is conjugacy stable more concrete: for every $u \in H$ that can be conjugated back into $H$ by some $g \in G \ssm H$, it is enough to check if $H \cap C_G(u) g$ is non-empty. Unfortunately, this simplification alone does not lead to an effective procedure since there can be infinitely many elements $g \in G \ssm H$ such that $H^g \cap H \neq 1$.

\smallskip

In order to deal with this issue we introduce the following subgroup property. We say that $H$ satisfies the {\em bounded non-trivial conjugate intersection} ({\em BNTCI} for short) property if there are finitely many elements $g_1, \ldots, g_n \in G$ such that if $H^g \cap H \neq 1$ for some $g \in G$, then there exists $i \in [1, n]$ such that $g \in H g_i H$. Let us call the elements $g_1, \ldots, g_n$ {\em BNTCI representatives in $G$ by $H$} and denote 
$$\mathcal{R}_G(H) = \{g_1, \ldots, g_n\}.$$

Notice that the BNTCI property has connections with the {\em bounded packing} property introduced in \cite{HruskaWise:2009}: a subgroup $H$ of a finitely generated group $G$ has {\em bounded packing} in $G$ if for each constant $D$, there is a number $N = N(G, H, D)$ so that for any collection of $N$ distinct cosets $g H$ in $G$, at least two are separated by a distance of at least $D$. It is known (see \cite{GitikMitraRipsSageev:1998}, \cite{HruskaWise:2009}) that quasi-convex subgroups of word-hyperbolic groups have bounded packing. Now, if $G$ is a torsion-free word-hyperbolic group and $H$ is a quasi-convex subgroup of $G$, then bounded packing of $H$ in $G$ implies the BNTCI property for $H$.

A quantitative variant of the bounded packing property was introduced in \cite[Section 6.4]{KMW:2014} for quasi-convex subgroups of automatic groups. Namely, let $G$ with a finite set of semigroup  generators $X$ be an automatic group with respect to a regular language $L \subseteq X^\ast$ and let $\nu : \N \to \N$ be a non-decreasing function. We say that the regular structure $(G, L)$ satisfies property $BP_\nu$ if, whenever $H$ and $K$ are $L$-quasi-convex subgroups of $G$ with constant of $L$-quasi-convexity $k$, if $K, g_1 H, \ldots, g_n H$ are pairwise distinct and if $K \cap \left(\bigcap_i H^{g_i}\right)$ is infinite, then $K$ as well as each $g_i H$ meets a ball of radius $\nu(k)$ in the Cayley graph of $G$. This more technical condition, if satisfied, implies the BNTCI property for $L$-quasi-convex subgroups of $G$ (see, \cite[Proposition 6.7]{KMW:2014}).

Now let us see how the BNTCI property helps with the conjugacy stability problem.

\begin{lemma} 
\label{le:general_2}
Let $G$ be a group and let $H \leqslant G$ satisfy the BNTCI property. Then $H$ is conjugacy stable if and only if for every $g_i \in \mathcal{R}_G(H)$ and every non-trivial $u \in H^{g_i^{-1}} \cap H$, the intersection $H \cap C_G(u) g_i$ is non-empty.
\end{lemma}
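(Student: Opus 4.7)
The plan is to use Lemma \ref{le:general_1} as the main tool and to reduce the check for arbitrary $g \in G \ssm H$ to a check on the finite list $\mathcal{R}_G(H)$ by exploiting the double-coset decomposition provided by BNTCI.

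For the forward direction, assume $H$ is conjugacy stable. If $g_i \in H$, then $g_i \in H \cap C_G(u) g_i$ trivially (take the identity in $C_G(u)$), so the intersection is non-empty regardless. If $g_i \in G \ssm H$ and $H^{g_i^{-1}} \cap H$ contains a non-trivial element $u$, then $g_i$ satisfies the hypothesis of Lemma \ref{le:general_1} (in particular $H^{g_i} \cap H \neq 1$ since it contains the non-trivial element $g_i^{-1} u g_i$), and the conclusion $H \cap C_G(u) g_i \neq \varnothing$ is immediate. If $H^{g_i^{-1}} \cap H = 1$, there is nothing to check.

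For the backward direction, by Lemma \ref{le:general_1} it suffices to show that for every $g \in G \ssm H$ with $H^g \cap H \neq 1$ and every non-trivial $u \in H^{g^{-1}} \cap H$, we have $H \cap C_G(u) g \neq \varnothing$. By the BNTCI property, write $g = h_1 g_i h_2$ with $h_1, h_2 \in H$ and $g_i \in \mathcal{R}_G(H)$. Set $u' = h_1^{-1} u h_1$, which lies in $H$ and is non-trivial since $u$ is. Since $g^{-1} u g \in H$, a short computation gives
$$g_i^{-1} u' g_i = h_2 (g^{-1} u g) h_2^{-1} \in H,$$
so $u' \in H^{g_i^{-1}} \cap H$. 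Applying the hypothesis to $g_i$ and $u'$, there exists $h_0' \in H \cap C_G(u') g_i$, say $h_0' = c' g_i$ with $c' \in C_G(u')$.

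Finally, I would verify that $h_0 = h_1 h_0' h_2$ belongs to $H \cap C_G(u) g$. Membership in $H$ is clear. For the centralizer coset, conjugating $u$ by $h_0$ and substituting unpacks to $h_2^{-1} (h_0'^{-1} u' h_0') h_2 = h_2^{-1} (g_i^{-1} u' g_i) h_2 = g^{-1} u g$, so $h_0 g^{-1} \in C_G(u)$. The only real content is this bookkeeping: essentially the proof consists of translating the hypothesis along the factorization $g = h_1 g_i h_2$, and the non-triviality of $u'$ is what guarantees the BNTCI-style hypothesis is applicable at the representative $g_i$. No deeper obstacle is expected.
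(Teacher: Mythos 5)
Your proof is correct and follows essentially the same route as the paper: the forward direction is an immediate application of Lemma \ref{le:general_1}, and the backward direction uses the double-coset factorization $g = h_1 g_i h_2$ to transport $u$ to $u' = h_1^{-1} u h_1 \in H^{g_i^{-1}} \cap H$ and translate the witness back, exactly as the paper does (the paper phrases this translation as an equivalence $H \cap C_G(u) g \neq \varnothing \Leftrightarrow H \cap C_G(h^{-1} u h) g_i \neq \varnothing$ rather than exhibiting $h_0$ explicitly, but the computation is the same). Your explicit handling of the case $g_i \in H$ in the forward direction is a small point of care the paper omits.
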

\begin{proof} 
If $H$ is conjugacy stable, then, by Lemma \ref{le:general_1}, for every $g \in G$ and every non-trivial $u \in H^{g^{-1}} \cap H$, the intersection $H \cap C_G(u) g$ is non-empty. In particular, this holds for every $g_i \in \mathcal{R}_G(H)$.

\smallskip

Let us prove the converse. Suppose $g \in G \ssm H$ is such that $H^g \cap H \neq 1$. By the BNTCI property there exist $h, f \in H$ such that $g = h g_i f$ for some $g_i \in \mathcal{R}_G(H)$. Then 
$$g H g^{-1} \cap H = (h g_i f) H (f^{-1} g_i^{-1} h^{-1}) \cap H = h (g_i H g_i^{-1} \cap H) h^{-1}$$
and for every $u \in g H g^{-1} \cap H$ we have that $h^{-1} u h \in g_i H g_i^{-1} \cap H$. Next, 
$$H \cap C_G(u) g = H \cap C_G(u) (h g_i f).$$
Observe that $H \cap C_G(u) g \neq \varnothing$ if and only if there exists $w \in H$ such that $w = a g,\ a \in C_G(u)$. Hence, $w = a (h g_i f) = h (h^{-1} a h) g_i f$, or
$$h^{-1} w f^{-1} = (h^{-1} a h) g_i \in C_G(h^{-1} u h) g_i,$$
where $h^{-1} w f^{-1} \in H$. Hence, $H \cap C_G(u) g \neq \varnothing$ if and only if $H \cap C_G(h^{-1} u h) g_i \neq \varnothing$, and the latter holds by assumption. Thus, the condition from Lemma \ref{le:general_1} is satisfied and $H$ is conjugacy stable.
\end{proof}

As follows from Lemma \ref{le:general_2}, the BNTCI property of $H$ really simplifies checking if $H$ is conjugacy stable and we will use it whenever the property is available.

Observe also that if $H \leqslant G$ satisfies the BNTCI property and $H^g \cap H$ is non-trivial, then there exist $h, f \in $ such that $g = h g_i f$ for some $g_i \in \mathcal{R}_G(H)$ and we have
$$H^g \cap H = H^{h g_i f} \cap H = f^{-1} (H^{g_i} \cap H) f.$$
That is, there is a finite list of subgroups $\mathcal{J}_G(H)$ of $G$ of the form $H^{g_i} \cap H,\ g_i \in \mathcal{R}_G(H)$ such that for every $g \in G$, if $H^g \cap H \neq 1$, then $H^g \cap H$ is conjugate to some $J \in \mathcal{J}_G(H)$ by an element from $H$.

\smallskip

Finally, notice that the mere existence of BNTCI representatives in $G$ by $H$ is not enough for algorithmic solution of the conjugacy stability problem: one has to be able to find $\mathcal{R}_G(H)$ effectively. Moreover, there are some other technical difficulties such as checking if the intersection $H \cap C_G(u) g_i$ is non-empty for every $u$ in the subgroup $H^{g_i^{-1}} \cap H$, which can be infinite.

Below we consider several classes of groups, where the method of solving the conjugacy stability problem based on Lemma \ref{le:general_1} and Lemma \ref{le:general_2} works.

\section{Effective solution of the conjugacy stability problem}
\label{se:applications}

In this section we apply the strategy outlined in Section \ref{se:general}. Namely, we consider several classes of groups, where finitely generated subgroups either have the BNTCI property (free and torsion-free word-hyperbolic groups), or ``almost'' have it (limit groups), and solve the conjugacy stability problem in those classes of groups.

\subsection{Free groups}
\label{se:free}

Let $F = F(X)$ be a free group on a finite alphabet $X$. Note that from \cite[Proposition 9.7]{Kapovich_Myasnikov:2002} and its proof it follows that every finitely generated subgroup $H$ of $F$ satisfies the BNTCI property. Moreover, a set of BNTCI representatives $\mathcal{R}_F(H)$ in $G$ by $H$ can be found effectively.

Let us fix a finitely generated subgroup $H \leqslant F$ and a set of BNTCI representatives $\mathcal{R}_F(H)$.

Suppose $g \in \mathcal{R}_F(H)$ and $u \in g H g^{-1} \cap H$. Observe that if $C_F(u) = \langle u \rangle$, that is, $C_F(u) = C_H(u)$, which happens only if $u$ is not a proper power in $F$, then 
$$H \cap C_F(u) g = H \cap \langle u \rangle g \subseteq H \cap H g = \varnothing.$$
It follows that if for some $g \in \mathcal{R}_F(H)$, the intersection $g H g^{-1} \cap H$ contains an element which is not a proper power in $F$, then $H$ fails to be conjugacy stable.

Suppose, on the other hand, that for $g \in \mathcal{R}_F(H)$, the intersection $g H g^{-1} \cap H$ contains only elements which are proper powers in $F$. Take any $h_1, h_2, h_3 \in g H g^{-1} \cap H$ such that $h_1 h_2 = h_3$. The elements $h_1, h_2, h_3$ are proper powers in $F$, so there exist $a, b, c \in F$ such that 
$$h_1 = a^k,\ h_2 = b^l,\ h_3 = c^m,\ k, l, m \geqslant 2$$
and we have
$$a^k b^l = c^m.$$
From \cite{Lyndon_Schutzenberger:1962} it follows that $a, b$ and $c$ commute with each other, hence, $h_1, h_2$, and $h_3$ must also commute and $g H g^{-1} \cap H$ is cyclic.

From the observation above the following proposition follows.

\begin{proposition}
\label{pr:free_algorithm}
Let $F = F(X)$ be a free group on a finite alphabet $X$. Then there exists an effective procedure to check if any finitely generated subgroup $H \leqslant F$ is conjugacy stable or not.
\end{proposition}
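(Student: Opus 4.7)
The plan is to execute Lemma \ref{le:general_2} algorithmically, leveraging the structural observations made just before the statement. First, using \cite[Proposition 9.7]{Kapovich_Myasnikov:2002} and its proof, I would compute a finite set $\mathcal{R}_F(H) = \{g_1, \ldots, g_n\}$ of BNTCI representatives effectively; by Lemma \ref{le:general_2} it then suffices to check, for each $g_i \notin H$ and every non-trivial $u \in J_i := H^{g_i^{-1}} \cap H$, whether $H \cap C_F(u) g_i \neq \varnothing$.

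Next, for each $i$, I would compute $J_i$ using the effective Howson property in free groups (for instance, via Stallings foldings or fibre products). The discussion preceding the statement reduces matters to a clean dichotomy: if some $J_i$ contains an element that is not a proper power in $F$, then $H$ is already not conjugacy stable and the algorithm terminates; otherwise, by Lyndon--Schutzenberger, each $J_i$ is infinite cyclic with a proper-power generator $v_i = w_i^{k_i}$, $k_i \geqslant 2$, where $w_i$ is the primitive root of $v_i$ in $F$. Deciding which side of the dichotomy one is on, computing a Nielsen basis of $J_i$, testing primitivity of a generator via the period of its cyclically reduced form, and extracting the root $w_i$ are all standard effective operations on reduced words.

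In the surviving case, $C_F(u) = \langle w_i \rangle$ for every non-trivial $u \in J_i$, so the \emph{a priori} infinite family of conditions attached to $g_i$ in Lemma \ref{le:general_2} collapses to a single test:
\[
\exists\, n \in \Z \text{ with } w_i^n g_i \in H.
\]
The algorithm outputs ``$H$ is conjugacy stable'' precisely when this test succeeds for every $g_i \notin H$.

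The main obstacle is to show that this final test is itself effective. My plan is to compute $H \cap \langle w_i \rangle$ using Howson: either it equals $\langle w_i^d \rangle$ for some $d \geqslant 1$, in which case the cosets $w_i^n H$ form a cyclic orbit of size $d$ and it suffices to test $w_i^n g_i \in H$ for $n \in \{0, 1, \ldots, d-1\}$ in the Stallings graph of $H$; or it is trivial, in which case iterating $w_i$ through the basepoint of the Stallings graph of $H$ must leave the finite core after finitely many steps and then proceed linearly through the attached Schreier trees, yielding an effective bound on the unique candidate $n$. Either branch reduces the test to a finite computation in the Stallings graph, and combining the four steps produces a terminating algorithm deciding conjugacy stability for any finitely generated $H \leqslant F$.
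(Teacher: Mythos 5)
Your proposal is correct and follows essentially the same route as the paper: compute $\mathcal{R}_F(H)$ via \cite[Proposition 9.7]{Kapovich_Myasnikov:2002}, reduce to Lemma \ref{le:general_2}, use the proper-power/Lyndon--Sch\"utzenberger dichotomy to force each $H^{g_i^{-1}}\cap H$ to be cyclic with a proper-power generator $w_i^{k_i}$, and then test whether $H\cap\langle w_i\rangle g_i$ is non-empty. The only difference is that you spell out the effectiveness of this last coset-intersection test via the Stallings/Schreier graph, where the paper simply cites \cite{Kapovich_Myasnikov:2002}; your argument there is sound.
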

\begin{proof}
Let $H$ be a finitely generated subgroup of $F$. 

\smallskip

\textbf{Step 1.} Compute $\mathcal{R}_F(H)$. It is finite and can be found effectively by \cite[Proposition 9.7]{Kapovich_Myasnikov:2002}.

\textbf{Step 2.} For every $g \in \mathcal{R}_F(H)$, find generators of the intersection $g H g^{-1} \cap H$. This can be done algorithmically by \cite[Corollary 9.5]{Kapovich_Myasnikov:2002}.

\textbf{Step 3.} Check if every $g H g^{-1} \cap H$ is cyclic. This can be done effectively as follows: construct a folded graph $\Gamma(g, H)$ representing $g H g^{-1} \cap H$, find a loop in $\Gamma(g, H)$ labeled by some $h_g \in H$, and check if $g H g^{-1} \cap H = \langle h_g \rangle$. All these operations can be done effectively (see \cite{Kapovich_Myasnikov:2002} for details).

If at least one intersection $g H g^{-1} \cap H$ is not cyclic, or if it is cyclic, but the generator is a root element in $F$ (this can be checked effectively), then stop. In this case $H$ is not conjugacy stable.

\textbf{Step 4.} Assuming that for every $g \in \mathcal{R}_F(H)$ we have 
$$g H g^{-1} \cap H = \langle h_g^{n_g} \rangle,$$
where $h_g$ is a root element in $F$ and $n_g \geqslant 2$, check if $H \cap C_F(h_g) g = H \cap \langle h_g \rangle g$ is non-empty. This can be done effectively (again, see \cite{Kapovich_Myasnikov:2002} for details). 

If $H \cap \langle h_g \rangle g$ is non-empty for every $g \in \mathcal{R}_F(H)$ then $H$ is conjugacy stable, otherwise it is not conjugcy stable.
\end{proof}

\subsection{Torsion-free hyperbolic groups}
\label{se:hyperbolic}

Recall that a finitely generated group $G$ is called {\em (word) hyperbolic} if its Cayley graph $\Gamma(G, S)$ with respect to some finite generating set $S$ satisfies the following property: there exists a constant $\delta > 0$, called a constant of hyperbolicity of $G$, such that every geodesic triangle $\Delta(x, y, z)$ with vertices $x, y, z \in \Gamma(G, S)$ is $\delta$-thin meaning that each side of $\Delta(x, y, z)$ lies inside the union of $\delta$-neighborhoods of the other two sides. If $S$ is fixed and such constant $\delta$ exists then we also call $G$ {\em $\delta$-hyperbolic}. 

Hyperbolic groups were introduced by Gromov in \cite{Gromov:1987} and now it is a well-studied class of groups (we refer the reader to \cite{Gromov:1987}, \cite{ABCFLMSS:1990}, \cite{Ghys_delaHarpe:1991}, \cite{Bridson_Haefliger:1999} for basic facts about hyperbolic groups). Notice that there is an algorithm that computes a constant of hyperbolicity $\delta$ given a  finite presentation of a group $G$ - the algorithm stops if $G$ is hyperbolic (see \cite{Papasoglu:1996}, \cite{EpsteinHolt:2001}).

For the rest of this section we fix a torsion-free $\delta$-hyperbolic group $G$ with a finite generating set $S$. A subgroup $H$ of $G$ is called {\em quasi-convex} if there exists a constant $k > 0$, called a quasi-convexity constant of $H$, such that every geodesic in the Cayley graph $\Gamma(G, S)$ connecting a pair of points that belong to $H$, lies inside the $k$-neighborhood of $H$. Notice that every quasi-convex subgroup $H$ is finitely generated and there exists an algorithm that computes a quasi-convexity constant of $H$ (see \cite{Kapovich:1996}). 

Quasi-convex subgroups have a lot of nice properties, in particular, many algorithmic problems for subgroups of hyperbolic groups can be solved only for quasi-convex subgroups. Hence, we only concentrate on the case, when $H \leqslant G$ is quasi-convex in $G$ with constant of quasi-convexity $k$. 

By \cite[Proposition 6.9]{KMW:2014}, the regular structure $(G, L)$, where $L$ is the language of all geodesics in $G$, satisfies property $BP_\nu$ (see the definition in Section \ref{se:general}) for the function $\nu(k) = k + 2\delta$, where $\delta$ is the hyperbolicity constant for $G$. We can assume that $\delta$ can be computed given the finite presentation of $G$, so the function $\nu$ is computable. Now, by \cite[Proposition 6.7(1)]{KMW:2014}, if $H$ has constant of quasi-convexity $k$, then every double coset $H g H$ such that $H^g \cap H$ is infinite, has a  representative of length at most $2 \nu(k)$. Notice that $|H^g \cap H| = \infty$ is equivalent to saying that $H^g \cap H \neq 1$ since $G$ is torsion-free, so it follows that $H$ satisfies the BNTCI property. Moreover, a set of BNTCI representatives $\mathcal{R}_G(H)$ in $G$ by $H$ can be found effectively.

Hence, again as in the free group case, we can use Lemma \ref{le:general_2} as the criterion for conjugacy stability.

Suppose $g \in \mathcal{R}_G(H)$ and $u \in g H g^{-1} \cap H$. Centralizers in $G$ are infinite cyclic, so we can assume that $C_H(u) = \langle u \rangle$. Repeating the argument given in Section \ref{se:free} for free groups, we obtain that if $C_G(u) = C_H(u)$, which happens only if $h$ is not a proper power of some element in $G$, then 
$$H \cap C_G(u) g = H \cap \langle u \rangle g \subseteq H \cap H g = \varnothing.$$
It follows that if the intersection $g H g^{-1} \cap H$ contains an element which is not a proper power if $G$, then $H$ fails to be conjugacy stable.

Suppose now all elements from $g H g^{-1} \cap H$ are proper powers in $G$. Then one can show that $g H g^{-1} \cap H$ is infinite cyclic generated by a proper power in $G$, as it happened for free groups. Indeed, suppose on the contrary that $H^{g^{-1}} \cap H$ is non-cyclic. Since $G$ is torsion-free, it is equivalent to saying that $H^{g^{-1}} \cap H$ is non-elementary. By \cite[Theorem C]{Kapovich:1999}, there exists a non-trivial subgroup $M \leqslant H^{g^{-1}} \cap H$ which is quasi-convex and malnormal in $G$. Now take an arbitrary non-trivial $a \in M$. Since $a \in H^{g^{-1}} \cap H$, it is a proper power of some element $x \in G \ssm \left( H^{g^{-1}} \cap H \right)$, in particular, $x \notin M$. But then $a \in M^x \cap M$, which contradicts malnormality of $M$. The contradiction comes from the assumption that $H^{g^{-1}} \cap H$ is non-cyclic.

Hence, as in the case of free groups, for every $g \in \mathcal{R}_G(H)$ we have $g H g^{-1} \cap H = \langle h_g^{n_g} \rangle$, where $h_g$ is a root element in $G$ and $n_g \geqslant 2$. To find out if $H$ is conjugacy stable we have to check if $H \cap C_G(h_g^{n_g}) g = H \cap C_G(h_g) g = H \cap \langle h_g \rangle g$ is non-empty. This can be summarized in the following proposition.

\begin{proposition}
\label{pr:hyp_algorithm}
Let $G$ be a torsion-free hyperbolic group. There exists an effective procedure to check if any quasi-convex subgroup $H \leq G$ is conjugacy stable or not. 
\end{proposition}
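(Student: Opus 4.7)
The plan is to replicate the proof strategy of Proposition~\ref{pr:free_algorithm}, substituting the appropriate algorithmic primitives for torsion-free hyperbolic groups. First I would compute the hyperbolicity constant $\delta$ from the given finite presentation of $G$ (via Papasoglu's or Epstein--Holt's procedure), and then, from generators of $H$, a quasi-convexity constant $k$ via Kapovich's algorithm. Enumerating the ball of radius $2\nu(k) = 2(k + 2\delta)$ in $G$ and filtering out those $g$ with $H^g \cap H = 1$ yields a finite superset of $\mathcal{R}_G(H)$, which by the discussion preceding the proposition is enough to invoke Lemma~\ref{le:general_2}.

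For each such $g$ I would then compute generators of the intersection $H^g \cap H$, using the fact that intersections of quasi-convex subgroups of a torsion-free hyperbolic group are again quasi-convex and effectively constructible from generators. By the argument given immediately before the proposition, $H$ is not conjugacy stable as soon as some $H^g \cap H$ is either non-cyclic, or cyclic generated by an element that is not a proper power in $G$. Both conditions are decidable: cyclicity of a computed quasi-convex subgroup can be tested, and in a torsion-free hyperbolic group one can algorithmically extract the (unique) root of any given element. Otherwise we obtain, for every $g \in \mathcal{R}_G(H)$, a decomposition $H^g \cap H = \langle h_g^{n_g} \rangle$ with $h_g$ a root element and $n_g \geqslant 2$.

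The final step is to decide whether $H \cap \langle h_g \rangle g \neq \varnothing$ for each such $g$. The key finite-reduction is that since $h_g^{n_g} \in H$, for any integer $n$ we have $h_g^n g \in H$ if and only if $h_g^{n + m n_g} g \in H$ for every $m \in \Z$; thus it suffices to test membership in $H$ of the finitely many elements $g,\ h_g g,\ h_g^2 g,\ \ldots,\ h_g^{n_g - 1} g$. Membership in a quasi-convex subgroup of a hyperbolic group is decidable, so this check terminates, and we declare $H$ conjugacy stable precisely when all such tests succeed for every $g \in \mathcal{R}_G(H)$; correctness then follows directly from Lemma~\ref{le:general_2}.

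The main obstacle is not any single mathematical step --- the structural analysis has essentially been carried out in the discussion preceding the proposition --- but rather the careful assembly of effective subroutines: the computation of $\delta$, of a quasi-convexity constant for $H$, of the enumeration bound for $\mathcal{R}_G(H)$, of intersections of quasi-convex subgroups, of roots of elements, and of the generalized word problem for $H$. Each of these is available in the literature on hyperbolic groups, but the proof must take care that no uncomputable constant slips silently into the procedure, and that the cyclicity dichotomy (proved via the malnormal-subgroup argument using \cite[Theorem C]{Kapovich:1999}) can be applied uniformly across all $g \in \mathcal{R}_G(H)$.
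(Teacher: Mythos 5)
Your proposal follows essentially the same route as the paper's proof: compute $\mathcal{R}_G(H)$ from the $2\nu(k)$ bound of \cite[Proposition 6.7(1)]{KMW:2014}, compute generators of each intersection $H^{g}\cap H$, apply the cyclicity/proper-power dichotomy established in the discussion preceding the proposition, and reduce conjugacy stability to deciding whether $H\cap\langle h_g\rangle g$ is non-empty. The only divergence is in that last step: the paper invokes \cite[Propositions 6.2, 6.3 and 6.10]{KMW:2014} to compute the coset $\langle h_g\rangle g$ and test its intersection with $H$, whereas you observe that since $h_g^{n_g}\in H$ the question reduces to the finitely many membership tests $h_g^i g\in H$ for $0\leq i<n_g$ --- a correct and somewhat more elementary alternative, given decidability of membership in quasi-convex subgroups.
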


\begin{proof} Let $H$ be a quasi-convex subgroup of $G$.

\textbf{Step 1.} Compute $\mathcal{R}_G(H)$, which is a finite set, using \cite[Proposition 6.7(1)]{KMW:2014}.

\textbf{Step 2.} For every $g \in \mathcal{R}_G(H)$, find generators of the intersection $g H g^{-1} \cap H$. This can be done algorithmically since $H$ is quasi-convex (\cite[Theorem 4.1]{KMW:2014} and \cite[Proposition 6.10]{KMW:2014}).

\textbf{Step 3.} Check if every $g H g^{-1} \cap H$ is cyclic. This can be done effectively using the fact that a torsion-free hyperbolic group is commutative transitive, so we just need to check if $g H g^{-1} \cap H$ is an abelian subgroup by verifying if $[x, y] = 1$ in $G$ for every pair of generators $x, y$ of $g H g^{-1} \cap H$. All these operations can be done effectively, since the word problem is solvable for hyperbolic groups.

If at least one intersection $g H g^{-1} \cap H$ is not cyclic, or if it is cyclic, but the generator is not a proper power in $G$, then stop. In this case $H$ is not conjugacy stable. Notice that we can check if an element is a proper power in $G$ since we can effectively compute a generating set for its cyclic centralizer.

\textbf{Step 4.} Assuming that for every $g \in \mathcal{R}_G(H)$ we have 
$$g H g^{-1} \cap H = \langle h_g^{n_g} \rangle,$$
where $h_g$ is a root element in $G$ and $n_g \geqslant 2$, check if $H \cap \langle h_g \rangle g$ is non-empty.
Effective computation of the coset $\langle h_g \rangle g$ follows from \cite[Proposition 6.2]{KMW:2014}, the 
intersection $H \cap \langle h_g \rangle g$ is computable by \cite[Proposition 6.10]{KMW:2014}, and checking if it is non-empty is effective by \cite[Proposition 6.3]{KMW:2014}.

If $H \cap \langle h_g \rangle g$ is non-empty for every $g \in \mathcal{R}_G(H)$ then $H$ is conjugacy stable, otherwise it is not conjugacy stable.
\end{proof}

\subsection{Limit groups}
\label{se:limit}

Recall that a group $G$ is called {\em fully residually free} if for any finite set of non-trivial elements $g_1, \ldots, g_k\in G$ there exists a homomorphism $\phi : G \to F$, where $F$ is a free group, such that $\phi(g_i) \neq 1$ in $F$ for every $i \in [1, k]$. Finitely generated fully residually free groups are also known as {\em limit groups}, where the term ``limit'' comes from the definition based on sequences of homomorphisms to free groups and their stable kernels
(see, for example, \cite{Bestvina_Feighn_2009} for details). The class of limit groups drew a lot of attention because of its connections with equations over free groups and Tarski problems for free groups. Limit groups are known to be {\em toral relatively hyperbolic}. Namely, every limit group $G$ is a torsion-free group hyperbolic relative to a {\em peripheral structure} $\mathcal{P}$, which is a finite collection $P_1, \ldots, P_r$ of finitely generated free abelian subgroups of $G$ called {\em peripheral subgroups}. In this case, one can classify elements of $G$ with respect to the peripheral structure $\mathcal{P}$ as follows: $g \in G$ is called {\em parabolic} if it conjugates into a peripheral subgroup and {\em hyperbolic} otherwise (notice, that the case of {\em elliptic} elements is impossible since every toral relatively hyperbolic group is torsion-free). Similarly, a subgroup $H \leqslant G$ is called {\em parabolic} if it conjugates into a peripheral subgroup and {\em hyperbolic} otherwise (a hyperbolic subgroup contains a hyperbolic element). 

The definitions and notions from the theory of relatively hyperbolic groups mentioned above is basically all we need in what follows. Detailed exposition of the theory can be found in \cite{Farb:1998}, \cite{Bowditch:2012}, \cite{Drutu_Sapir:2005}, \cite{Osin:2006}.

Algorithmic problems for limit groups have been studied from different viewpoints. One approach is based on considering limit groups as finitely generated subgroups of $F^{\mathbb{Z}[t]}$ (see \cite{Kharlampovich_Myasnikov:1998a} for details) and then translating Stallings folding techniques from subgroups of free groups to subgroups of $F^{\mathbb{Z}[t]}$ (see \cite{Myasnikov_Remeslennikov_Serbin:2005, Kharlampovich_Myasnikov_Remeslennikov_Serbin:2006}).

\smallskip

Now, suppose $G$ is a limit group and $H \leqslant G$ is finitely generated. In order to take advantage of Lemma \ref{le:general_2} while deciding if $H$ is conjugacy stable we need the BNTCI property of $H$. Unfortunately, the situation is more complicated than in the cases of free and word hyperbolic groups, and the BNTCI property does not hold for finitely generated subgroups of $G$ in general. At the same time, the following results are known. 

\begin{theorem} \cite[Theorem 7]{Kharlampovich_Myasnikov_Remeslennikov_Serbin:2006}
\label{th:limit_1}
Let $G$ be a limit group and let $H \leqslant G$ be finitely generated. Then one can effectively find a finite family $\mathcal{J}_G(H)$ of non-trivial finitely generated subgroups of $G$ (given by finite generating sets), such that
\begin{enumerate}
\item every $J \in \mathcal{J}_G(H)$ is of one of the following types
$$H^{g_1} \cap H,\ \ \ H^{g_1} \cap C_H(g_2),$$
where $g_1 \in G \ssm H,\ g_2 \in H$ (the elements $g_1,\ g_2$ can be found effectively), and
\item for any non-trivial intersection $H^g \cap H,\ g \in G \ssm H$ there exists $J \in \mathcal{J}_G(H)$ and $f \in H$ such that
$$H^g \cap H = J^f,$$
where $J$ and $f$ can be found effectively.
\end{enumerate}
\end{theorem}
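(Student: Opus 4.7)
The plan is to leverage the embedding of the limit group $G$ into Lyndon's free exponential group $F^{\mathbb{Z}[t]}$ and the Stallings-style folded graph calculus for finitely generated subgroups developed in the Myasnikov--Remeslennikov--Serbin papers cited above. Throughout I would use the peripheral structure $\mathcal{P} = \{P_1, \ldots, P_r\}$ of $G$ and split conjugators $g \in G \ssm H$ into \emph{hyperbolic} and \emph{parabolic}, according to whether some power of $g$ conjugates into a peripheral subgroup.

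First I would handle the hyperbolic case. Relative hyperbolicity of $G$, together with the fact that finitely generated subgroups of toral relatively hyperbolic groups are relatively quasi-convex, yields a bounded-packing-style statement analogous to the one used in Section \ref{se:hyperbolic}: only finitely many double cosets $H g H$ with hyperbolic representative produce a non-trivial intersection $H^g \cap H$. Each such representative contributes a subgroup of the first type, $H^{g_1} \cap H$, and generators can be extracted via pullbacks of the folded $F^{\mathbb{Z}[t]}$-graphs of $H$ and $H^{g_1}$.

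The parabolic case is the main obstacle and is what forces the second subgroup type $H^{g_1} \cap C_H(g_2)$ into the family. When $g$ conjugates into some peripheral $P_i$, one can pre- and post-multiply $g$ by elements of $H \cap P_i^{g^{-1}}$ without changing the relevant intersection up to $H$-conjugacy, producing infinitely many double cosets. Using that limit groups are CSA (maximal abelian subgroups are malnormal and centralizers are abelian), I would fix, for each $H$-orbit of peripheral interaction patterns, a single parabolic representative $g_1$ and show that the residual ambiguity is absorbed by the abelian centralizer $C_H(g_2)$ of an effectively determined $g_2 \in H$ coming from the interaction with $P_i$; concretely, every parabolic non-trivial intersection then admits a normal form $(H^{g_1} \cap C_H(g_2))^f$ with $f \in H$. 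The hardest step is this reduction, namely proving that finitely many pairs $(g_1, g_2)$ suffice and that the normal form really covers all parabolic intersections.

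Finally, effectiveness throughout reduces to the $F^{\mathbb{Z}[t]}$-subgroup algorithms: effective computation of intersections of $H$ with conjugate subgroups and with centralizers of given elements, solvability of the membership problem, and enumeration of representatives of bounded complexity. Once the structural reduction above is in place, a terminating search produces the finite family $\mathcal{J}_G(H)$ together with the data $g_1, g_2, f$ required in the statement.
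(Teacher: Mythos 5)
This statement is not proved in the paper at all: it is quoted, with citation, from \cite[Theorem 7]{Kharlampovich_Myasnikov_Remeslennikov_Serbin:2006}, so there is no in-paper argument to measure your proposal against. Your sketch does point at the toolbox actually used in that source --- the embedding of $G$ into $F^{\mathbb{Z}[t]}$ and the folded-graph calculus for subgroups labelled by infinite $\mathbb{Z}[t]$-words --- but the argument there is combinatorial, proceeding by induction on the position of $G$ in the iterated-centralizer-extension hierarchy of $F^{\mathbb{Z}[t]}$ and analysing intersections of folded graphs directly, rather than by invoking relative hyperbolicity, relative quasi-convexity and bounded packing as you do. (Incidentally, your parenthetical claim that finitely generated subgroups of toral relatively hyperbolic groups are relatively quasi-convex is false in that generality; it is true for limit groups, which is what you need, but that is a theorem of Dahmani, not a formal consequence of relative hyperbolicity.)

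As a proof, the proposal has a genuine gap, and you name it yourself: ``the hardest step is this reduction, namely proving that finitely many pairs $(g_1, g_2)$ suffice and that the normal form really covers all parabolic intersections.'' That reduction \emph{is} the theorem; everything else in the outline is bookkeeping around it, and announcing that the residual ambiguity ``is absorbed by'' $C_H(g_2)$ is a restatement of the conclusion, not an argument. A second concern is that your case split is organized by the type of the \emph{conjugator} $g$ (hyperbolic versus parabolic), whereas the finiteness phenomena relevant here are governed by the type of the \emph{intersection} $H^g \cap H$ --- compare Theorem \ref{th:limit_2} and Lemma \ref{le:limit_non-abelian-intersect}, where the dichotomy is non-abelian versus abelian intersection. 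You would at minimum need to justify that a hyperbolic conjugator forces membership in one of finitely many double cosets with non-trivial intersection, and that every source of infinitely many double cosets is traceable to a peripheral subgroup in the way your parabolic case assumes; neither is established, and the first is not the statement that bounded packing gives you.
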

As was noted in the end of Section \ref{se:general}, the property BNTCI implies existence of such a family ${\mathcal J}_G(H)$, but the converse is not true. The following result is the maximum we can obtain in terms of the property BNTCI for finitely generated subgroups of limit groups.

\begin{theorem} \cite[Lemma 4.2]{Nikolaev_Serbin:2011}
\label{th:limit_2}
Let $G$ be a limit group and let $H \leqslant G$ be finitely generated. There are finitely many double cosets $H g_1 H, \ldots, H g_n H$ in $G$, where $g_1,\ldots, g_n$ can be found effectively, such that for any $g \in G$ if $H^g \cap H$ is non-abelian, then there exists $i \in [1,n]$ such that $g \in H g_i H$.
\end{theorem}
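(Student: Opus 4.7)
The strategy is to refine the output of Theorem \ref{th:limit_1} using the special algebraic features of the non-abelian case together with the toral relatively hyperbolic structure of the limit group $G$.

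First I would apply Theorem \ref{th:limit_1} to produce the finite, effectively computable family $\mathcal{J}_G(H)$. Each $J \in \mathcal{J}_G(H)$ is of one of two forms, $H^{g_1} \cap H$ or $H^{g_1} \cap C_H(g_2)$. Since limit groups are $CSA$ (every maximal abelian subgroup is malnormal), every non-trivial centralizer $C_H(g_2)$ is abelian, so any $J$ of the second form is automatically abelian. Hence the non-abelian members of $\mathcal{J}_G(H)$ are precisely those of the first form, and each such $J$ comes equipped with an explicit element $g_1^{(J)} \in G \ssm H$ given by Theorem \ref{th:limit_1}. The filtering step is effective: abelianness of a finitely generated subgroup of $G$ reduces to checking commutation of pairs of generators, which uses only the solvable word problem.

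Next, suppose $g \in G$ is arbitrary with $K := H^g \cap H$ non-abelian. Theorem \ref{th:limit_1} yields $J \in \mathcal{J}_G(H)$ and $f \in H$ with $K = J^f$; by the previous paragraph we may take $J = H^{g_1^{(J)}} \cap H$. Setting $g_0 = g_1^{(J)} f$ we obtain $H^g \cap H = H^{g_0} \cap H = K$, and since $f \in H$ it suffices to prove that $g \in H g_0 H$, equivalently that $\sigma := g g_0^{-1}$ lies in $H \cdot N_G(g_0 K g_0^{-1})$. The structural input is that $K$, being non-abelian, cannot be contained in any peripheral subgroup of $G$ and is therefore a non-elementary relatively quasiconvex subgroup; for such subgroups of a toral relatively hyperbolic group the normalizer in $G$ is controlled by a malnormality-modulo-parabolics principle, and combined with the containment $g_0 K g_0^{-1} \leqslant H$ the normalizer overhead is absorbed into $H$. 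This is the step I expect to be the main obstacle, and it is exactly the point where the argument genuinely departs from the free and torsion-free word-hyperbolic cases, where non-abelian intersections are automatically non-elementary in the strict sense and normalizers are tame by classical means.

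Finally, the promised list $g_1,\ldots,g_n$ is taken to be the elements $g_1^{(J)}$ as $J$ ranges over the non-abelian members of $\mathcal{J}_G(H)$. Effective computability rests on three ingredients, all already available: effective access to $\mathcal{J}_G(H)$ and the accompanying $g_1^{(J)}$ via Theorem \ref{th:limit_1}; effective decision of abelianness of finitely generated subgroups of $G$; and effective computation of centralizers, normalizers, and conjugators in limit groups, which are available through the $F^{\mathbb{Z}[t]}$ viewpoint and the Stallings-type folded-graph techniques of \cite{Myasnikov_Remeslennikov_Serbin:2005, Kharlampovich_Myasnikov_Remeslennikov_Serbin:2006}.
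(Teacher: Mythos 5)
This statement is not proved in the paper at all: it is imported verbatim from \cite[Lemma 4.2]{Nikolaev_Serbin:2011}, where it is established by Stallings-type folding techniques for subgroups of $F^{\mathbb{Z}[t]}$ equipped with infinite words, not by any reduction from Theorem \ref{th:limit_1}. Your plan attempts exactly such a reduction, and that is where it breaks. The paper itself flags the danger: the existence of the finite family $\mathcal{J}_G(H)$ is strictly weaker than the BNTCI property (``the converse is not true''), because knowing that every non-trivial intersection $H^g \cap H$ is $H$-conjugate to one of finitely many subgroups says nothing, by itself, about how many double cosets $HgH$ realize a given intersection. Concretely, after you arrange $H^g \cap H = H^{g_0} \cap H = K$ with $g_0 = g_1^{(J)} f$, the element $\sigma = g g_0^{-1}$ does \emph{not} normalize $g_0 K g_0^{-1}$: it conjugates $g_0 K g_0^{-1}$ onto the a priori different subgroup $g K g^{-1}$ of $H$, so there is no reason for $\sigma$ to lie in $H \cdot N_G(g_0 K g_0^{-1})$. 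And even if it did, writing $g = h n g_0$ with $n \in N_G(g_0 K g_0^{-1})$ gives $g \in H g_0 H$ only if $g_0^{-1} n g_0 \in H$, i.e.\ only if $N_G(K) \leqslant H$, which is not part of your hypotheses. So both the reduction and the step you yourself identify as the main obstacle are unestablished; the set $\{g : H^g \cap H = K\}$ could a priori meet infinitely many double cosets, and ruling this out is precisely the content of the theorem.

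A repairable version of your idea would have to replace the normalizer heuristic with a genuine finite-width statement: since a non-abelian $K = H^g \cap H$ is infinite and non-parabolic, one wants the relative-height/bounded-packing machinery for relatively quasiconvex subgroups of toral relatively hyperbolic groups (in the spirit of \cite{GitikMitraRipsSageev:1998} and \cite{HruskaWise:2009}, using that limit groups are locally relatively quasiconvex) to bound the number of double cosets $HgH$ with $H^g \cap H$ non-elementary and non-parabolic, together with an effectivity argument for producing the representatives. That is a substantively different proof from the one you sketch, and also different from the source's $F^{\mathbb{Z}[t]}$ argument; as written, your proposal has a genuine gap at its central step.
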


Actually, in \cite{Nikolaev_Serbin:2011}, the above result was formulated for finitely generated subgroups of $F^{\mathbb{Z}[t]}$, but it is equivalent to what we just stated.

So, it turns out that any finitely generated subgroup $H$ of a limit group $G$ ``almost'' has the BNTCI property - it holds only for non-abelian non-trivial intersections $H^g \cap H$. The following lemma shows that existence of a non-abelian intersection $H^g \cap H$ for some $g \in G$ is, actually, an obstacle for conjugacy stability of $H$.

\begin{lemma}
\label{le:limit_non-abelian-intersect}
Let $G$ be a limit group and $H \leqslant G$ be finitely generated. If $H^{g^{-1}} \cap H$ is non-abelian for some $g \in G \ssm H$, then $H$ is not conjugacy stable.
\end{lemma}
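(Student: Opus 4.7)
The plan is to reduce, as in Sections~\ref{se:free} and~\ref{se:hyperbolic}, to finding a single non-trivial element $u$ of $K := H^{g^{-1}} \cap H$ whose $G$-centralizer already lies inside $H$. Indeed, if $C_G(u) \subseteq H$, then $C_G(u) g \subseteq H g$, and since $g \notin H$ we have $H \cap H g = \varnothing$, so $H \cap C_G(u) g = \varnothing$; Lemma~\ref{le:general_1} then forces $H$ to fail conjugacy stability.

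A limit group $G$ is toral relatively hyperbolic, and in such a group the centralizer of a hyperbolic element is infinite cyclic, generated by its maximal root in $G$. So it will suffice to exhibit a hyperbolic element $u \in K$ that is not a proper power in $G$: then $C_G(u) = \langle u \rangle \subseteq H$ since $u \in K \subseteq H$. Since $K$ is non-abelian it cannot consist entirely of parabolic elements --- a subgroup of a relatively hyperbolic group whose every element is parabolic is conjugate into a single peripheral subgroup, hence free abelian --- so $K$ contains at least one hyperbolic element. Combining this with non-abelianness of $K$, and replacing a parabolic partner by its product with a sufficiently high power of a hyperbolic element if necessary, I would extract two non-commuting hyperbolic elements $v_1, v_2 \in K$ and take $u = v_1^2 v_2^2 \in K$.

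Two properties of this $u$ must then be verified. First, $u$ is not a proper power in $G$: any equation $v_1^2 v_2^2 = c^p$ with $p \geq 2$ would descend, via every homomorphism $\phi : G \to F$ to a free group, to the equation $\phi(v_1)^2 \phi(v_2)^2 = \phi(c)^p$ and, by the classical Lyndon--Schützenberger theorem in $F$, force $[\phi(v_1), \phi(v_2)] = 1$ there; full residual freeness of the limit group $G$ would then yield $[v_1, v_2] = 1$, contradicting the choice of $v_1, v_2$. Second, $u$ must be shown to be hyperbolic, which I would obtain from a ping-pong / Klein-criterion argument on suitably high powers of $v_1, v_2$ inside the toral relatively hyperbolic group $G$. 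The hyperbolicity step is the main technical hurdle and is where one must choose the $v_i$, and if necessary replace $v_1^2 v_2^2$ by $v_1^M v_2^N$ for sufficiently large $M, N \geq 2$, with some care to ensure that the resulting element is genuinely loxodromic rather than accidentally parabolic.
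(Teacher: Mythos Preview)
Your reduction is exactly the paper's: find some $u\in K=H^{g^{-1}}\cap H$ with $C_G(u)\subseteq H$, then Lemma~\ref{le:general_1} finishes. The paper, however, organizes the search by contradiction rather than by construction. It assumes $C_H(h)\lneqq C_G(h)$ for \emph{every} $h\in K$, so every hyperbolic element of $K$ is a proper power, and then splits into two cases: (i) $K$ consists entirely of hyperbolic elements, where your Lyndon--Sch\"utzenberger argument (applied to any relation $h_1h_2=h_3$) forces $K$ to be abelian; (ii) $K$ contains a parabolic $y$ and a hyperbolic $x$, where one writes $x=p^k$ and $xy^l=q^m$ with $k,l,m\ge 2$ and again invokes Lyndon--Sch\"utzenberger to get $[x,y]=1$.

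The place where the two approaches genuinely diverge is the hyperbolicity step you flag as ``the main technical hurdle'', and this is also the one real gap in your write-up. Two things need justification: first, producing a second hyperbolic element from a parabolic one (your ``replacing a parabolic partner by its product with a high power of a hyperbolic element''), and second, ensuring that $v_1^Mv_2^N$ is itself loxodromic. The paper handles exactly this issue by citing \cite[Lemma~3.5]{Arzhantseva_Minasyan_Osin:2007}: if $y$ is parabolic in a conjugate of $P_i$ and $x\notin P_i$ up to that conjugation, then $xy^l$ is hyperbolic for infinitely many $l$. This single lemma both supplies the needed hyperbolic products in case~(ii) of the paper's argument and would equally complete your construction. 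A bare ping-pong sketch is not enough here, because the relevant products could in principle land in a peripheral conjugate; the Arzhantseva--Minasyan--Osin lemma is precisely the tool that rules this out. With that citation in hand, your constructive route goes through and is essentially a repackaging of the paper's case analysis.
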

\begin{proof}
Fix a peripheral structure $\mathcal{P} = \{P_1, \ldots, P_r\}$ of finitely generated free abelian subgroups of $G$.

Assume that $H^{g^{-1}} \cap H$ is not abelian. We are going to show that in this case $H$ cannot be conjugacy stable. 

Notice that if $C_G(h) = C_H(h)$ for some $h \in H^{g^{-1}} \cap H$, then 
$$H \cap C_G(h) g = H \cap C_H(h) g \subseteq H \cap H g = \varnothing,$$
and $H$ fails to be conjugacy stable by Lemma \ref{le:general_1}. 

Now, assume that for every $h \in H^{g^{-1}} \cap H$ we have $C_H(h) \lneqq C_G(h)$. 

If $h \in H^{g^{-1}} \cap H$ is hyperbolic then the centralizer $C_G(h)$ is an infinite cyclic. Moreover, $h$ must be a proper power in $G$, otherwise $C_G(h) = C_H(h)$. If we assume that all elements of $H^{g^{-1}} \cap H$ are hyperbolic, take any $h_1, h_2, h_3 \in H^{g^{-1}} \cap H$ such that $h_1 h_2 = h_3$, so there exist $a, b, c \in G$ such that 
$$h_1 = a^k,\ h_2 = b^l,\ h_3 = c^m,\ k, l, m \geqslant 2$$
and
$$a^k b^l = c^m.$$
Combining the fact that $G$ (as well as $H$ and $H^{g^{-1}} \cap H$) is a limit group, that is, it is fully residually free, and the result of Lyndon and Schutzenberger \cite{Lyndon_Schutzenberger:1962}, we obtain that $a, b$ and $c$ commute with each other. Hence, $g H g^{-1} \cap H$ is cyclic - a contradiction with the assumption that $H^{g^{-1}} \cap H$ is not abelian. Hence, $H^{g^{-1}} \cap H$ must contain parabolic elements.

Finally, let $y \in H^{g^{-1}} \cap H$ be parabolic. Hence, there exists a peripheral subgroup $P_i$ such that $f^{-1} y f \in P_i$ for some $f \in G$. Since $H^{g^{-1}} \cap H$ is not abelian by assumption, there exists a hyperbolic element $x \in H^{g^{-1}} \cap H$, in particular, $f^{-1} x f \notin P_i$. By \cite[Lemma 3.5]{Arzhantseva_Minasyan_Osin:2007}, the product $(f^{-1} x f) (f^{-1} y^l f)$ is hyperbolic for infinitely many values of $l$. That is, $x y^l$ is hyperbolic for infinitely many values of $l$. Fix some value $l \geqslant 2$ (if all values of $l$ for which the product $(f^{-1} x f) (f^{-1} y^l f)$ is hyperbolic are negative, the argument is similar to what follows). Since all hyperbolic elements of $H^{g^{-1}} \cap H$ are proper powers in $G$, there exist $p, q \in G$ such that 
$$x = p^k,\ x y^l = q^m,\ k, m \geqslant 2$$
and 
$$p^k y^l = q^m,$$
where $k, l, m \geqslant 2$. Again, using the result of Lyndon and Schutzenberger one can conclude that $[p, y] = 1$, hence, $[x, y] = 1$ (since $G$ is commutative transitive). This gives a contradiction with the choice of $x$ and $y$. 

Hence, the assumption that for every $h \in H^{g^{-1}} \cap H$ we have $C_H(h) \lneqq C_G(h)$ leads to a contradiction and $H$ cannot be conjugacy stable.
\end{proof}

Obviously, Lemma \ref{le:limit_non-abelian-intersect} comes in handy by eliminating the difficulty of checking if $H \cap C_G(h) g$ is non-empty for infinitely many elements $h \in H^{g^{-1}} \cap H$. Indeed, if $H^{g^{-1}} \cap H$ is not abelian, then there are infinitely many non-commuting elements in $H^{g^{-1}} \cap H$, whose centralizers are different from each other, and effective application of Lemma \ref{le:general_1} becomes problematic.

Finally, we deal with abelian intersections.

\begin{lemma}
\label{le:abelian-intersect}
Let $G$ be a limit group and $H \leqslant G$ be finitely generated. Let $\mathcal{J}_G(H)$ be a finite collection of subgroups from Theorem \ref{th:limit_1} and assume that every $J \in \mathcal{J}_G(H)$ is abelian. Then $H$ is conjugacy stable if and only if for every $C, D \in \mathcal{J}_G(H)$ such that $w^{-1} C w = D$ for some $w \in G \ssm H$, the intersection $H \cap \overbar{C} w$ is non-empty, where $\overbar{C}$ is the maximal abelian subgroup of $G$ containing $C$.
\end{lemma}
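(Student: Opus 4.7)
The plan is to prove both directions by reducing, via Lemma~\ref{le:general_1}, the question of non-emptiness of $H \cap C_G(u) g$ for arbitrary $g \in G \ssm H$ with $H^g \cap H \neq 1$ to the finite data in $\mathcal{J}_G(H)$ given by Theorem~\ref{th:limit_1}. The crucial structural fact I will use is that limit groups are CSA (hence commutative transitive), so for every non-trivial $u \in G$ the centralizer $C_G(u)$ coincides with the unique maximal abelian subgroup $\overbar{\langle u \rangle}$ containing $u$. In particular, since every non-trivial intersection $H^g \cap H$ is abelian by hypothesis on $\mathcal{J}_G(H)$, for any non-trivial $u \in H^g \cap H$ one has $C_G(u) = \overbar{H^g \cap H}$; this makes $C_G(u)$ independent of the choice of $u$ in that intersection and equal to $\overbar{J}$ whenever $H^g \cap H$ is conjugate in $H$ to $J \in \mathcal{J}_G(H)$.

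For the forward direction, assume $H$ is conjugacy stable and suppose $C, D \in \mathcal{J}_G(H)$ with $w^{-1} C w = D$ for some $w \in G \ssm H$. Pick any non-trivial $u \in C \subseteq H$; then $w^{-1} u w \in D \subseteq H$, so Lemma~\ref{le:general_1} applied to $g = w$ (noting $H^w \cap H \supseteq D \neq 1$) yields $H \cap C_G(u) w \neq \varnothing$, and by the observation above $C_G(u) = \overbar{C}$, giving the required non-emptiness.

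For the backward direction, assume the stated coset condition and let $g \in G \ssm H$ with $H^g \cap H \neq 1$; I need to exhibit an element of $H \cap C_G(u) g$ for any non-trivial $u \in H^{g^{-1}} \cap H$. By Theorem~\ref{th:limit_1} there exist $J, J' \in \mathcal{J}_G(H)$ and $f, f' \in H$ such that
$$H^g \cap H = f^{-1} J f, \qquad H^{g^{-1}} \cap H = (f')^{-1} J' f'.$$
Conjugating the first identity by $g$ and comparing with the second, a short computation shows that $w = f' g f^{-1} \in G \ssm H$ satisfies $w^{-1} J' w = J$. By hypothesis, there is $h \in H$ with $h = a w$ for some $a \in \overbar{J'} = C_G(u')$, where $u' = f' u (f')^{-1} \in J'$. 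Then $h' := (f')^{-1} h f \in H$, and substituting $h = a f' g f^{-1}$ gives $h' = \bigl((f')^{-1} a f'\bigr) g$ with $(f')^{-1} a f' \in (f')^{-1} \overbar{J'} f' = \overbar{\langle u \rangle} = C_G(u)$. Hence $h' \in H \cap C_G(u) g$, and Lemma~\ref{le:general_1} yields conjugacy stability.

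The main obstacle is the bookkeeping in the backward direction: the finite family $\mathcal{J}_G(H)$ only captures intersections up to $H$-conjugacy, so I have to carefully track the two independent conjugators $f, f'$ producing $H^g \cap H$ and $H^{g^{-1}} \cap H$ from members of $\mathcal{J}_G(H)$, and verify both that the composite element $w = f' g f^{-1}$ lies outside $H$ (which follows from $g \notin H$) and that conjugating the witness $h$ produced by the hypothesis back by $(f')^{-1}\cdot f$ lands inside $C_G(u) g$ rather than merely in $C_G(u') w$. Once the identification $C_G(u) = \overbar{\langle u \rangle}$ is in place, this is a direct change-of-coordinates computation.
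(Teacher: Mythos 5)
Your proof is correct and follows essentially the same route as the paper's: both directions reduce to Lemma \ref{le:general_1}, using commutative transitivity of limit groups to identify $C_G(u)$ with the maximal abelian subgroup over the relevant member of $\mathcal{J}_G(H)$, and the backward direction performs the same change-of-coordinates with $w = f' g f^{-1}$ (the paper's $w = f g h^{-1}$) and the same witness element $(f')^{-1} h f$ (the paper's $g_0 = f^{-1} w_0 h$). The only cosmetic difference is that you conjugate the subgroup identity $g^{-1}(H^{g^{-1}} \cap H)g = H^g \cap H$ directly, where the paper tracks the specific pair $u, v$ and verifies $g_0^{-1} u g_0 = v$ by hand.
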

\begin{proof}
Suppose $H$ is conjugacy stable and let $C, D \in \mathcal{J}_G(H)$ be such that $w^{-1} C w = D$ for some $w \in G \ssm H$. Hence, there exist $u \in C,\ v \in D$ such that $w^{-1} u w = v$, that is, $u \in H^{w^{-1}} \cap H$. By Lemma \ref{le:general_1}, the intersection $H \cap C_G(u) w$ is non-empty. Notice that since $C$ is abelian, $C_G(u)$ is a maximal abelian subgroup $\overbar{C}$ of $G$ that contains $C$, and it follows that $H \cap \overbar{C} w$ is non-empty.

\smallskip

Now we prove the converse. Assume that for every $C, D \in \mathcal{J}_G(H)$ such that $w^{-1} C w = D$ for some $w \in G \ssm H$, the intersection $H \cap \overbar{C} w$ is non-empty, where $\overbar{C}$ is the maximal abelian subgroup of $G$ containing $C$. We are going to show that $H$ is conjugacy stable.

Let $u, v \in H$ and $g \in G \ssm H$ be such that $g^{-1} u g = v$. Then $v \in H^g \cap H$ and by Theorem \ref{th:limit_1}, $H^g \cap H = D^h$ for some $D \in \mathcal{J}_G(H)$ and $h \in H$. Similarly, $H^{g^{-1}} \cap H$ is conjugate in $H$ to some $C \in \mathcal{J}_G(H)$, that is, $H^{g^{-1}} \cap H = C^f$, where $f \in H$. Since $u \in C^f,\ v \in D^h$ and $g^{-1} u g = v$, from the fact that $G$ is commutative transitive, it follows that $g^{-1} C^f g = D^h$, or $w^{-1} C w = D$, where $w = f g h^{-1}$. By assumption, the intersection $H \cap \overbar{C} w$ is non-empty, where $\overbar{C}$ is the maximal abelian subgroup of $G$ containing $C$. That is, there exists $w_0 \in H$ such that $w_0 = c w$ and $c \in \overbar{C}$. Observe that $c \in \overbar{C}$ implies that $c$ commutes with $f u f^{-1}$.
Finally, let $g_0 = f^{-1} w_0 h$. Notice that $g_0 \in H$ and we obtain
$$g_0^{-1} u  g_0 = (h^{-1} w_0^{-1} f)  u (f^{-1} w_0 h) = (h^{-1} w^{-1} c^{-1} f) u (f^{-1} c w h)$$
$$= (h^{-1} w^{-1} c^{-1}) (f u f^{-1}) (c w h) =  (h^{-1} w^{-1}) (f u f^{-1}) (w h)$$
$$ = (g^{-1} f^{-1}) (f u f^{-1}) (f g) = v.$$
In other words, $H$ is conjugacy stable.
\end{proof}

Finally, we are ready to present an algorithm that decides if a finitely generated subgroup of a limit group is conjugacy stable.

\begin{proposition}
\label{pr:limit_algorithm}
Let $G$ be a limit group. Then there exists an effective procedure to check if any finitely generated subgroup $H \leqslant G$ is conjugacy stable or not.
\end{proposition}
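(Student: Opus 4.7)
The plan is to assemble Theorem \ref{th:limit_1} together with Lemmas \ref{le:limit_non-abelian-intersect} and \ref{le:abelian-intersect} into a concrete algorithm. Given a finite generating set of $H$, I would first invoke Theorem \ref{th:limit_1} to compute the finite family $\mathcal{J}_G(H)$ effectively, with each $J \in \mathcal{J}_G(H)$ delivered by an explicit finite generating set. I would then test each such $J$ for abelianness by checking commutators of its generators using the solvable word problem in $G$. Since every $J \in \mathcal{J}_G(H)$ sits inside some $H^{g_1}\cap H$, existence of a non-abelian $J$ forces $H^{g_1}\cap H$ to be non-abelian, and Lemma \ref{le:limit_non-abelian-intersect} then immediately yields that $H$ is not conjugacy stable.

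If every $J \in \mathcal{J}_G(H)$ turns out to be abelian, I would apply Lemma \ref{le:abelian-intersect}: for each ordered pair $(C, D)$ with $C, D \in \mathcal{J}_G(H)$, I would (i) use decidability of the subgroup conjugacy problem for finitely generated subgroups of a limit group to test whether $C$ and $D$ are $G$-conjugate, producing a witness $w_0$ if so; (ii) effectively compute the maximal abelian subgroup $\overbar{C}$ of $G$ containing $C$; and (iii) determine whether some $w \in G \ssm H$ conjugates $C$ to $D$. Here the CSA property of limit groups is very convenient: since $\overbar{C}$ is malnormal, one gets $N_G(C) = \overbar{C}$, so the set of all conjugating elements is precisely the single coset $\overbar{C} w_0$, and in particular the coset $\overbar{C} w_0$ does not depend on the choice of witness. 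Step (iii) therefore reduces to deciding whether $\overbar{C} w_0 \subseteq H$, which is a finite check on $w_0$ and the generators of $\overbar{C}$ via the generalized membership problem. Whenever $\overbar{C} w_0 \not\subseteq H$, I would then decide whether $H \cap \overbar{C} w_0 \neq \varnothing$ using the coset--subgroup intersection procedure available for finitely generated subgroups of limit groups. If this intersection turns out to be empty for some pair, the algorithm outputs ``not conjugacy stable''; otherwise it outputs ``conjugacy stable'', correctness being immediate from Lemma \ref{le:abelian-intersect}.

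The main obstacle is not conceptual but algorithmic: the procedure depends on having effective versions of (a) the subgroup conjugacy problem, (b) computation of the maximal abelian subgroup containing a given finitely generated abelian subgroup, (c) the generalized membership problem, and (d) non-emptiness of intersections of the form $H \cap K w$ for a finitely generated subgroup $K$ and an element $w$. All four are known to be effective in limit groups through the Stallings-folding machinery for $F^{\mathbb{Z}[t]}$ developed in \cite{Myasnikov_Remeslennikov_Serbin:2005, Kharlampovich_Myasnikov_Remeslennikov_Serbin:2006}, so executing the plan amounts to citing these effective ingredients at the appropriate steps and invoking Lemmas \ref{le:limit_non-abelian-intersect} and \ref{le:abelian-intersect} for correctness.
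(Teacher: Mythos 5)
Your proposal is correct and follows essentially the same route as the paper: compute $\mathcal{J}_G(H)$ via Theorem \ref{th:limit_1}, dispose of the non-abelian case with Lemma \ref{le:limit_non-abelian-intersect}, and in the abelian case test conjugacy of pairs $C, D \in \mathcal{J}_G(H)$ and non-emptiness of $H \cap \overbar{C} w$ using the effective procedures of \cite{Kharlampovich_Myasnikov_Remeslennikov_Serbin:2006}, with correctness from Lemma \ref{le:abelian-intersect}. Your additional observation that, by the CSA property, the full set of conjugators is the single coset $\overbar{C} w_0$ (so the check is independent of the witness) is a worthwhile clarification that the paper's proof leaves implicit, but it does not change the argument.
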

\begin{proof}
Let $H$ be a finitely generated subgroup of $G$. 

\smallskip

\textbf{Step 1.} Compute the set of subgroups $\mathcal{J}_G(H)$. The set is finite and all its elements can be found effectively (see \cite[Theorem 7]{Kharlampovich_Myasnikov_Remeslennikov_Serbin:2006}).

\textbf{Step 2.} Check if every $J \in \mathcal{J}_G(H)$ is abelian. If there is a non-abelian $J \in \mathcal{J}_G(H)$, then $H$ is not conjugacy stable by Lemma \ref{le:limit_non-abelian-intersect}.

\textbf{Step 3.} Assuming that every element of $\mathcal{J}_G(H)$ is abelian, for every $C,\ D \in \mathcal{J}_G(H)$ determine if $C$ is conjugate to $D$ in $G$ and if they are, compute some conjugating element $w_{C, D} \in G$. This can be done effectively (see \cite[Corollary 7]{Kharlampovich_Myasnikov_Remeslennikov_Serbin:2006}).

\textbf{Step 4.} For every $C,\ D \in \mathcal{J}_G(H)$ conjugate in $G$ (that is, $w_{C,D} \in G$ from the previous step exists), compute $H \cap \overbar{C} w_{C, D}$, where $\overbar{C}$ is the maximal abelian subgroup of $G$ containing $C$. Notice that $\overbar{C}$ can be found effectively by \cite[Theorem 8]{Kharlampovich_Myasnikov_Remeslennikov_Serbin:2006}: take a generator of $C$ and compute its centralizer in $G$. The intersection $H \cap \overbar{C} w_{C, D}$ can also be found effectively by \cite[Theorem 5]{Kharlampovich_Myasnikov_Remeslennikov_Serbin:2006}. Finally, according to Lemma \ref{le:abelian-intersect}, $H$ is conjugacy stable if and only if every such intersection $H \cap \overbar{C} w_{C, D}$ is non-empty which can be effectively determined.
\end{proof}

{\bf Acknowledgement.} We would like to thank Alexei Miasnikov for suggesting to take a look at the conjugacy stability problem in free groups. This initial study of ours was then extended and eventually resulted in this paper.


\begin{thebibliography}{10}

\bibitem{ABCFLMSS:1990}
J.~{Alonso}, T.~{Brady}, D.~{Cooper}, V.~{Ferlini}, M.~{Lustig}, M.~{Mihalik},
  M.~{Shapiro}, and H.~{Short}.
\newblock {\em {Notes on word hyperbolic groups}}.
\newblock Group Theory from a geometric viewpoint. World Scientific, Singapore,
  1990.

\bibitem{Arzhantseva_Minasyan_Osin:2007}
G.~{Arzhantseva}, A.~{Minasyan}, and D.~{Osin}.
\newblock {The $SQ$-universality and residual properties of relatively
  hyperbolic groups}.
\newblock {\em J. Algebra}, 315(1):165--177, 2007.

\bibitem{Bestvina_Feighn_2009}
M.~{Bestvina} and {Feighn} M.
\newblock {\em Notes on Sela's work: Limit groups and Makanin-Razborov
  diagrams}, pages 1--29.
\newblock London Mathematical Society Lecture Note Series. Cambridge University
  Press, 2009.

\bibitem{Bowditch:2012}
B.~H. {Bowditch}.
\newblock Relatively hyperbolic groups.
\newblock {\em Int. J. Algebr. Comput.}, 22(3):1250016, 66, 2012.

\bibitem{Brauer:1941}
R.~{Brauer}.
\newblock {On the connection between the ordinary and the modular characters of
  groups of finite order}.
\newblock {\em Ann. of Math.}, 42(2):926--935, 1941.

\bibitem{Bridson_Haefliger:1999}
M.~R. {Bridson} and A.~{Haefliger}.
\newblock {\em {Metric spaces of non-positive curvature}}.
\newblock Springer, New York, 1999.

\bibitem{Calvez_CisnerosDeLaCruz_Cumplido:2020}
M.~{Calvez}, B.~{Cisneros de la Cruz}, and M.~{Cumplido}.
\newblock {Conjugacy stability of parabolic subgroups of Artin-Tits groups of
  spherical type}.
\newblock {\em J. Algebra}, 556:621--633, 2020.

\bibitem{Drutu_Sapir:2005}
C.~{Drutu} and M.~{Sapir}.
\newblock {Tree-graded spaces and asymptotic cones of groups}.
\newblock {\em Topology}, 44(5):959--1058, 2005.

\bibitem{EpsteinHolt:2001}
D.~B.~A. {Epstein} and D.~F {Holt}.
\newblock {Computation in word-hyperbolic groups}.
\newblock {\em Int. J. Algebr. Comput.}, 11(4):467--487, 2001.

\bibitem{Farb:1998}
B.~{Farb}.
\newblock {Relatively hyperbolic groups}.
\newblock {\em GAFA, Geom. Func, Anal.}, 8:810--840, 1998.

\bibitem{Ghys_delaHarpe:1991}
E.~{Ghys} and P.~{de la Harpe}.
\newblock {\em {Espaces Metriques Hyperboliques sur les Groupes Hyperboliques
  d'apr\`{e}s Michael Gromov}}.
\newblock Birkhauser, 1991.

\bibitem{GitikMitraRipsSageev:1998}
R.~{Gitik}, M.~{Mitra}, E.~{Rips}, and M.~{Sageev}.
\newblock {Widths of subgroups}.
\newblock {\em Tran. Amer. Math. Soc.}, 350(1):321--329, 1998.

\bibitem{Gromov:1987}
M.~{Gromov}.
\newblock {Hyperbolic groups}.
\newblock In S.~M. Gersten, editor, {\em Essays in Group Theory, MSRI
  Publications}, volume~8, pages 75--263. Springer, 1987.

\bibitem{HruskaWise:2009}
C.~{Hruska} and D.~{Wise}.
\newblock {Packing subgroups in relatively hyperbolic groups}.
\newblock {\em Geom. Topol.}, 13(4):1945--1988, 2009.

\bibitem{Kapovich:1996}
I.~{Kapovich}.
\newblock Detecting quasiconvexity: Algorithmic aspects.
\newblock In {\em Geometric and Computational Perspectives on Infinite Groups
  (Minneapolis, MN and New Brunswick, NJ, 1994)}, volume~25 of {\em DIMACS:
  Series in Discrete Mathematics and Theoretical Computer Science}, pages
  91--99. American Mathematical Society, 1996.

\bibitem{Kapovich:1999}
I.~{Kapovich}.
\newblock {A non-quasiconvexity embedding theorem for hyperbolic groups}.
\newblock {\em Math. Proc. Cambridge Philos. Soc.}, 127(3):461--486, 1999.

\bibitem{Kapovich_Myasnikov:2002}
I.~{Kapovich} and A.~G. {Myasnikov}.
\newblock Stallings foldings and subgroups of free groups.
\newblock {\em J. Algebra}, 248:608--668, 2002.

\bibitem{Kharlampovich_Myasnikov:1998a}
O.~{Kharlampovich} and A.~{Myasnikov}.
\newblock {Irreducible affine varieties over a free group. I: Irreducibility of
  quadratic equations and Nullstellensatz}.
\newblock {\em J. Algebra}, 200(2):472--516, 1998.

\bibitem{KMW:2014}
O.~{Kharlampovich}, A.~{Myasnikov}, and P.~{Weil}.
\newblock {Stallings graphs for quasi-convex subgroups}.
\newblock {\em J. Algebra}, 488:442--483, 2017.

\bibitem{Kharlampovich_Myasnikov_Remeslennikov_Serbin:2006}
O.~{Kharlampovich}, A.~G. {Myasnikov}, V.~N. {Remeslennikov}, and D.~{Serbin}.
\newblock {Subgroups of fully residually free groups: algorithmic problems}.
\newblock In {\em Group theory, Statistics and Cryptography}, volume 360 of
  {\em Contemporary Mathematics}, pages 63--101. American Mathematical Society,
  2004.

\bibitem{Lyndon_Schutzenberger:1962}
R.~C. {Lyndon} and M.~P. {Sch\"{u}tzenberger}.
\newblock {The equation $a^M = b^N c^P$ in a free group}.
\newblock {\em Michigan Math. J.}, 9(4):289--298, 1962.

\bibitem{Myasnikov_Remeslennikov_Serbin:2005}
A.~G. {Myasnikov}, V.~{Remeslennikov}, and D.~{Serbin}.
\newblock {Regular free length functions on Lyndon's free $\mathbb{Z}[t]$-group
  $F^{\mathbb{Z}[t]}$}.
\newblock In {\em Algorithms, Languages, Logic}, volume 378 of {\em
  Contemporary Mathematics}, pages 37--77. American Mathematical Society, 2005.

\bibitem{Nikolaev_Serbin:2011}
A.~{Nikolaev} and D.~{Serbin}.
\newblock {Finite index subgroups of fully residually free groups}.
\newblock {\em Internat. J. Algebra Comput.}, 21(4):651--673, 2011.

\bibitem{Ol'shanskii_Sapir:2004}
A.~Yu. {Ol'shanskii} and M.~{Sapir}.
\newblock {\em The Conjugacy Problem and Higman Embeddings}, volume 170 of {\em
  Mem. Amer. Math. Soc.}
\newblock AMS Press, Providence, 2004.

\bibitem{Osin:2006}
D.~{Osin}.
\newblock {\em {Relatively hyperbolic groups: Intrinsic geometry, algebraic
  properties, and algorithmic problems}}, volume 179 of {\em Mem. Amer. Math.
  Soc.}
\newblock AMS Press, Providence, 2006.

\bibitem{Papasoglu:1996}
P.~{Papasoglu}.
\newblock An algorithm detecting hyperbolicity.
\newblock In {\em Geometric and Computational Perspectives on Infinite Groups
  (Minneapolis, MN and New Brunswick, NJ, 1994)}, volume~25 of {\em DIMACS:
  Series in Discrete Mathematics and Theoretical Computer Science}, pages
  193--200. American Mathematical Society, 1996.

\bibitem{Thompson:1980}
R.~{Thompson}.
\newblock Embeddings into finitely generated simple groups which preserve the
  word problem.
\newblock In {\em Word problems, II (Conf. on Decision Problems in Algebra,
  Oxford, 1976)}, volume~95 of {\em Studies in Logic and the Foundations of
  Mathematics}, pages 401--441. North-Holland, Amsterdam-New York, 1980.

\end{thebibliography}
\end{document}